
\documentclass{amsart}
\usepackage[all]{xy}\usepackage{amssymb,latexsym}
\newtheorem{theorem}{Theorem}
\newcommand{\bt}{\begin{theorem}}
\newcommand{\et}{\end{theorem}}
\newtheorem{lemma}{Lemma}
\newcommand{\bl}{\begin{lemma}}
\newcommand{\el}{\end{lemma}}
\newcommand{\bmat}{\left(\begin{matrix}}
\newcommand{\emat}{\end{matrix}\right)}
\newcommand{\bsmallmat}{\left(\begin{smallmatrix}}
\newcommand{\esmallmat}{\end{smallmatrix}\right)}

\title{The Hermite-Sylvester criterion for real-rooted polynomials}
\author{Melvyn B. Nathanson}
\address{Department of Mathematics\\Lehman College (CUNY)\\Bronx, NY 10468}
\email{melvyn.nathanson@lehman.cuny.edu}
\subjclass[2010]{05C31, 11C08, 15A15, 65H04.}
\keywords{Hermite-Sylvester criterion, real-rooted polynomials, Newton's identities, Lagrange interpolation.}
\thanks{Supported in part by a grant from the PSC-CUNY Research Award Program.}
\date{\today}

\begin{document}

\begin{abstract}
A polynomial is \emph{real-rooted} if all of its roots are real.
This note gives a simple proof of the Hermite-Sylvester theorem that 
a  polynomial $f(x) \in {\mathbf R}[x]$  is real-rooted 
if and only if an associated quadratic form is positive semidefinite. 
\end{abstract}

\maketitle

A polynomial is \emph{real-rooted} if all of its roots are real. 
Hermite and Sylvester~\cite{vond16}  proved that  a  polynomial $f(x) \in {\mathbf R}[x]$  is real-rooted 
if and only if a certain quadratic form is positive semidefinite. 
Here is a short proof of the Hermite-Sylvester theorem that uses only Lagrange interpolation. 

The \emph{quadratic form}\index{quadratic form}  associated with a real 
$n \times n$ matrix $H = \bmat h_{i,j} \emat$ 
is 
\[
Q = Q(x_1,\ldots, x_n) = \sum_{i=1}^n \sum_{j=1}^n h_{i,j} x_i x_j.
\]
This form is \emph{positive semidefinite}\index{positive semidefinite form} 
if $Q(x_1,\ldots, x_n) \geq 0$ for all vectors $\bsmallmat x_1 \\ \vdots \\ x_n \esmallmat \in {\mathbf R}^n$.

Let $f(x)$ be a polynomial of degree $n$, and let $\lambda_1,\ldots, \lambda_n$ be the 
(not necessarily distinct) roots of $f(x)$.  
Define the \emph{$k$th power sum}\index{ power sum}  
\[
m_k = m_k(\lambda_1,\ldots, \lambda_n) = \sum_{\ell=1}^n \lambda_{\ell}^k.
\]
Newton's identities~\cite{zeil84} enable the efficient computation of the numbers $m_k$ 
from the coefficients of $f(x)$.
The \emph{Hermite matrix}\index{Hermite matrix} constructed from 
the polynomial $f(x)$ is the $n\times n$ matrix 
\begin{align*}
H_f   & = 
\bmat 
m_0 & m_1 & m_2 & \cdots & m_{n-1} \\
m_1 & m_2 & m_3 & \cdots & m_n \\
m_2 & m_3 & m_4 & \cdots & m_{n+1} \\
\vdots &&&& \vdots \\
m_{n-1} & m_n & m_{n+1} & \cdots & m_{2n-2} 
\emat  = \bmat h_{i,j} \emat 
\end{align*} 
where 
$
h_{i,j} = m_{i+j-2}
$
for all $i,j \in \{ 1,\ldots, n \}$.

Let $ \overline{\lambda}$ denote the complex conjugate of the complex number $\lambda$.

\begin{lemma}           \label{perm:lemma:Lagrange-0}
Let $\Lambda$ be a  nonempty finite set of $r$  complex numbers that 
is closed under complex conjugation, that is, 
\[
\Lambda = \left\{ \overline{\lambda}: \lambda  \in \Lambda \right\}.
\]
Let 
\[
p(t) = \sum_{j=0}^{r-1} c_j   t^j \in  {\mathbf C}[t]
\]
 be a polynomial of degree at most $r-1$.  If
\begin{equation}                    \label{perm:Lagrange-0}
p(\overline{\lambda} ) = \overline{p(\lambda)} 
\end{equation}
for all $\lambda \in \Lambda$, then $p(t) \in {\mathbf R}[t]$.
\end{lemma}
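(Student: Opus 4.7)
The plan is to introduce the companion polynomial $q(t) = \overline{p(\overline{t})}$ and show that it coincides with $p(t)$, whence $p$ has real coefficients.

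First I would observe that if $p(t) = \sum_{j=0}^{r-1} c_j t^j$, then
\[
q(t) := \overline{p(\overline{t})} = \sum_{j=0}^{r-1} \overline{c_j}\, t^j
\]
is again a polynomial of degree at most $r-1$, and the assertion $p(t) \in \mathbf{R}[t]$ is equivalent to $p=q$. So it suffices to prove that the two polynomials agree.

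Next I would use the hypothesis to exhibit $r$ points where $p$ and $q$ take the same value. For each $\lambda \in \Lambda$, the assumption~\eqref{perm:Lagrange-0} gives $p(\overline{\lambda}) = \overline{p(\lambda)} = q(\overline{\lambda})$. Since $\Lambda$ is closed under complex conjugation, the set $\{\overline{\lambda} : \lambda \in \Lambda\}$ equals $\Lambda$ and therefore has exactly $r$ distinct elements. Thus $p$ and $q$ agree at $r$ distinct points, yet each has degree at most $r-1$; by Lagrange interpolation (equivalently, by the fundamental theorem of algebra applied to $p-q$) they must be identically equal.

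Comparing coefficients in $p=q$ yields $c_j = \overline{c_j}$ for every $j$, so each $c_j$ is real and $p(t) \in \mathbf{R}[t]$. The argument is essentially a one-liner once the companion polynomial $q$ is introduced; the only mild obstacle is recognizing that the conjugation-symmetry of $\Lambda$ is precisely what converts the $r$ hypotheses~\eqref{perm:Lagrange-0} into $r$ genuine coincidences between $p$ and $q$, allowing the Lagrange-interpolation uniqueness to apply.
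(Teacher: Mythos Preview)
Your proof is correct and is essentially identical to the paper's: both introduce the conjugate-coefficient polynomial $q(t)=\sum_{j=0}^{r-1}\overline{c_j}\,t^j$, use the hypothesis together with the conjugation-closure of $\Lambda$ to show that $p$ and $q$ agree at all $r$ points of $\Lambda$, and then invoke uniqueness of degree-$(r-1)$ interpolation to conclude $p=q$ and hence $c_j\in\mathbf{R}$. The only cosmetic difference is that you phrase $q$ as $\overline{p(\overline{t})}$ and verify agreement at the points $\overline{\lambda}$, whereas the paper writes out the chain $p(\lambda)=p(\overline{\overline{\lambda}})=\overline{p(\overline{\lambda})}=q(\lambda)$ directly at the points $\lambda$.
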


\begin{proof}
We use the fact that if polynomials $p(t) \in  {\mathbf C}[t]$ and $q(t) \in  {\mathbf C}[t]$ 
have degrees at most $r-1$,  
and if $p(\lambda) = q(\lambda)$ for all $\lambda$ in a set of size $r$, 
then $p(t) = q(t)$.
Let   
\[
q(t) = \sum_{j=0}^{r-1} \overline{c_j} \  t^j.
\]
For all $\lambda \in  {\mathbf C}$, we have $\overline{ \overline{ \lambda}} = \lambda$.  
For all $\lambda \in \Lambda$, we have $\overline{\lambda} \in \Lambda$, and so 
\begin{align*}
 p(\lambda) & = p \left(\overline{ \overline{ \lambda}} \right) =  \overline{  p \left(  \overline{ \lambda}\right)  } 
 = \overline{ \sum_{j=0}^{r-1} c_j \overline{ \lambda}^j } 
   =  \sum_{j=0}^{r-1}  \overline{c_j}  \overline{ \overline{ \lambda}}^j  =  \sum_{j=0}^{r-1}  \overline{c_j}  \lambda^j 
 =  q(\lambda).
\end{align*}
Therefore, $p(t) = q(t)$, and so, for all $j \in \{0,1,\ldots, r-1\}$, 
we have  $c_j = \overline{c_j} \in {\mathbf R}$ and $p(t) \in {\mathbf R}[t]$.  
This completes the proof.   
\end{proof}

\begin{lemma}                   \label{perm:lemma:Lagrange-1}
Let $\Lambda  = \{\lambda_1,\lambda_2,\lambda_3,\ldots, \lambda_r\}$ 
be a nonempty finite set of $r$ complex numbers that 
is closed under complex conjugation.  
Suppose that $\lambda_1 \in \Lambda$ is not real, and that $\lambda_2 = \overline{\lambda_1} \in \Lambda$.  
There exists a polynomial $p(t) \in {\mathbf R}[t]$ of degree at most $r - 1$ 
such that 
\begin{equation}                    \label{perm:Lagrange-1}
p(\lambda_1) = i, \qquad p(\lambda_2) = -i 
\end{equation}
and
\begin{equation}                    \label{perm:Lagrange-2}
p(\lambda_j) = 0 \qquad \text{for all $j \in \{3,4,\ldots, r\}$.}
\end{equation}
\end{lemma}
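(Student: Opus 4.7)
The plan is to build $p(t)$ by Lagrange interpolation over $\mathbf{C}$ and then invoke Lemma~\ref{perm:lemma:Lagrange-0} to conclude that the interpolating polynomial actually has real coefficients.

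First I would observe that since $\lambda_1, \lambda_2, \ldots, \lambda_r$ are pairwise distinct (as $\Lambda$ has $r$ elements) and since we are prescribing values at $r$ points, the classical Lagrange interpolation theorem produces a unique $p(t) \in \mathbf{C}[t]$ of degree at most $r-1$ satisfying $p(\lambda_1) = i$, $p(\lambda_2) = -i$, and $p(\lambda_j) = 0$ for $j \in \{3, \ldots, r\}$. I would not bother writing down the explicit Lagrange formula, since only the existence and the degree bound matter here.

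Next I would verify the hypothesis of Lemma~\ref{perm:lemma:Lagrange-0}, namely that $p(\overline{\lambda}) = \overline{p(\lambda)}$ for every $\lambda \in \Lambda$. For $\lambda = \lambda_1$, I have $p(\overline{\lambda_1}) = p(\lambda_2) = -i = \overline{i} = \overline{p(\lambda_1)}$, and symmetrically for $\lambda = \lambda_2$. For $j \geq 3$, the key point is that the subset $\{\lambda_3, \ldots, \lambda_r\} = \Lambda \setminus \{\lambda_1, \lambda_2\}$ is itself closed under complex conjugation, because $\Lambda$ is closed under conjugation and $\{\lambda_1, \lambda_2\}$ is a conjugate pair that gets removed as a block. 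Hence for each $j \geq 3$ there is some $k \geq 3$ with $\overline{\lambda_j} = \lambda_k$, and thus $p(\overline{\lambda_j}) = p(\lambda_k) = 0 = \overline{0} = \overline{p(\lambda_j)}$.

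With the hypothesis verified, Lemma~\ref{perm:lemma:Lagrange-0} immediately gives $p(t) \in \mathbf{R}[t]$, completing the proof. There is no real obstacle here; the only substantive step is the observation that removing the conjugate pair $\{\lambda_1, \lambda_2\}$ preserves closure under conjugation of the remaining set, which is what lets the interpolation conditions be compatible with the symmetry required by Lemma~\ref{perm:lemma:Lagrange-0}.
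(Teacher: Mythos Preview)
Your proof is correct and follows essentially the same approach as the paper: construct $p(t)$ by Lagrange interpolation over $\mathbf{C}$, check the conjugation condition at each $\lambda_j$, and then invoke Lemma~\ref{perm:lemma:Lagrange-0}. Your explicit remark that $\Lambda\setminus\{\lambda_1,\lambda_2\}$ remains closed under conjugation is a nice clarification of the step the paper leaves implicit when it asserts $p(\overline{\lambda_j})=0$ for $j\geq 3$.
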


\begin{proof}
By Lagrange interpolation, there exists a polynomial $p(t) \in  {\mathbf C}[t]$ of degree at most $r - 1$ 
that satisfies~\eqref{perm:Lagrange-1} and~\eqref{perm:Lagrange-2}.  
We have  
\[
p\left(\overline{\lambda_1}\right) = p(\lambda_2) = -i = \overline{i} = \overline{p(\lambda_1)}.
\]
and
\[
p\left(\overline{\lambda_2}\right) = p(\lambda_1) = i = \overline{-i} = \overline{p(\lambda_2)}.
\]
Also, $p\left(\overline{\lambda_j}\right) = 0 = \overline{p(\lambda_j)} $ 
for all $j \in \{3,4,\ldots, r\}$.
It follows from Lemma~\ref{perm:lemma:Lagrange-0} that $p(t) \in {\mathbf R}[t]$.
This completes the proof.  
\end{proof}

\begin{theorem}[Hermite-Sylvester]                           \label{perm:theorem:HermSyl}
The nonconstant polynomial $f(x)  \in {\mathbf R}[x]$ is real-rooted if and only if the quadratic form 
$Q_f$ constructed from the Hermite matrix $H_f$ is positive semidefinite.
\end{theorem}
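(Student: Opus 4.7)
The plan is to reduce everything to a single sum-of-squares identity. For real parameters $x_1,\ldots,x_n$, define the real polynomial
\[
p_x(t) = \sum_{i=1}^n x_i t^{i-1} \in \mathbf{R}[t],
\]
of degree at most $n-1$. Expanding $m_{i+j-2} = \sum_{\ell=1}^n \lambda_\ell^{i+j-2}$ and interchanging the order of summation, I would first verify the identity
\[
Q_f(x_1,\ldots,x_n) = \sum_{\ell=1}^n p_x(\lambda_\ell)^2.
\]
This is the only real computation required, and it is a direct manipulation of double sums.

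For the forward direction, assume $f$ is real-rooted, so every $\lambda_\ell \in \mathbf{R}$. Then $p_x(\lambda_\ell) \in \mathbf{R}$, hence $p_x(\lambda_\ell)^2 \geq 0$ for each $\ell$, so $Q_f(x_1,\ldots,x_n) \geq 0$ and the quadratic form is positive semidefinite.

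For the converse, suppose $f$ has a non-real root. I would apply Lemma~\ref{perm:lemma:Lagrange-1} to the set $\Lambda$ of \emph{distinct} roots of $f$, which is closed under complex conjugation since $f \in \mathbf{R}[x]$. Reordering if necessary, take $\lambda_1 \in \Lambda$ non-real with $\lambda_2 = \overline{\lambda_1}$ and let $\lambda_3,\ldots,\lambda_r$ be the remaining distinct roots. The lemma supplies a real polynomial $p(t)$ of degree at most $r-1 \leq n-1$ with $p(\lambda_1) = i$, $p(\lambda_2) = -i$, and $p(\lambda_j) = 0$ for $j \geq 3$. Write $p(t) = \sum_{i=1}^n x_i t^{i-1}$ with $x_i \in \mathbf{R}$ (padding with zero coefficients), and substitute into the identity above: denoting by $k_1$ and $k_2$ the multiplicities of $\lambda_1$ and $\lambda_2$ in $f$, and noting $k_1 = k_2$ since $f$ is real, I get
\[
Q_f(x_1,\ldots,x_n) = k_1 \cdot i^2 + k_2 \cdot (-i)^2 + 0 = -2k_1 < 0,
\]
so $Q_f$ fails to be positive semidefinite.

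The main (and only nontrivial) obstacle is writing down the sum-of-squares identity correctly; once that is in place, the forward direction is immediate and the converse is exactly where Lemma~\ref{perm:lemma:Lagrange-1} was designed to be used, producing a real vector at which $Q_f$ is strictly negative.
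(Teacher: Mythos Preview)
Your proposal is correct and follows essentially the same approach as the paper: you derive the identity $Q_f(x_1,\ldots,x_n)=\sum_{\ell=1}^n p_x(\lambda_\ell)^2$, use it directly for the forward direction, and for the converse invoke Lemma~\ref{perm:lemma:Lagrange-1} on the set of distinct roots to produce a real vector at which $Q_f=-2k_1<0$. The paper carries out exactly this plan, with the same computation and the same use of the lemma.
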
 

\begin{proof}
Let $f(x) \in {\mathbf R}[x]$ be a  polynomial of degree $n \geq 1$, and let $\Lambda$ 
be the set of distinct roots of $f(x)$.    
We have  $|\Lambda| = r \leq n$.  
Let $\Lambda = \{\lambda_1,\lambda_2,\ldots, \lambda_r\}$.
Because $f(x)$ has real coefficients, the set $\Lambda$ is closed under conjugation.  

Extend the sequence $\lambda_1,\lambda_2,\ldots, \lambda_r$ of $r$ distinct roots of $f(x)$ 
to the sequence $\lambda_1,\ldots,\lambda_r, \lambda_{r+1}, \ldots,  \lambda_n$ of $n$ roots of $f(x)$ 
with  multiplicity.  
Thus, for $j \in \{1,\ldots, r\}$, the root $\lambda_j$ with multiplicity $\mu_j$ appears $\mu_j$ times in this sequence.

Let $H_f = \bmat h_{i,j}\emat$ be the Hermite matrix constructed from the polynomial $f(x)$, 
and let $Q_f$ be the quadratic form constructed from $H_f$.  
For $\bsmallmat x_1 \\ \vdots \\ x_n \esmallmat \in {\mathbf R}^n$, we have 
\begin{align*}
Q_f(x_1,\ldots, x_n) & = \sum_{i=1}^n \sum_{j=1}^n h_{i,j} x_i x_j 
= \sum_{i=1}^n \sum_{j=1}^n m_{i+j-2} x_i x_j \\ 
& = \sum_{i=1}^n \sum_{j=1}^n \sum_{\ell=1}^n \lambda_{\ell}^{i+j-2} x_i x_j  
= \sum_{\ell=1}^n \left( \sum_{i=1}^n  x_i  \lambda_{\ell}^{i-1} \right) \left( \sum_{j=1}^n x_j \lambda_{\ell}^{j-1}  \right) \\
& = \sum_{\ell=1}^n\left( \sum_{j=1}^n x_j  \lambda_{\ell}^{j-1} \right)^2 = \sum_{\ell=1}^n p\left( \lambda_{\ell} \right)^2
\end{align*}
where 
\[
p(t) =  \sum_{j=1}^n x_j  t^{j-1} \in {\mathbf R}[t].  
\]
If $f(x)$ is real-rooted, then  $\lambda_{\ell} \in {\mathbf R}$ for all $\ell \in \{ 1,\ldots, n\}$, 
and so the sum $p\left( \lambda_{\ell} \right) = \sum_{j=1}^n x_j  \lambda_{\ell}^{j-1}$ is real 
and $ p\left( \lambda_{\ell} \right)^2 \geq 0$.  Thus, if $f(x)$ is real-rooted, then 
$Q_f(x_1,\ldots, x_n)  \geq 0$ and the quadratic form $Q_f$ is positive semidefinite.  

Suppose that $f(x)$ is not real-rooted.  
In this case, the set $\Lambda$ contains a complex number that is not real, 
 and $\Lambda$  also contains its complex conjugate.   
 We can assume that $\lambda_1 \in \Lambda$ is not real and that $\lambda_2 = \overline{\lambda_1}$. 
Note that $\mu_1 \geq 1$ is the multiplicity  of the root $\lambda_1$,   
and that $\mu_1$ is also the multiplicity  of the root $\lambda_2$.    
By Lemma~\ref{perm:lemma:Lagrange-1}, there exists a polynomial $p(t) \in {\mathbf R}[t]$ of degree $d-1 \leq r-1$  
such that 
\[
p(\lambda_1) = i, \qquad p(\lambda_2) = -i
\]
and 
\[
p(\lambda_{\ell}) = 0 \qquad\text{for $\ell = 3, \ldots, r$.}
\]
Because $r-1 \leq n-1$, we can write $p(t) = \sum_{j=1}^n x_j t^{j-1}$, 
where ${\mathbf x} = \bsmallmat x_1\\ \vdots \\ x_n \esmallmat \in {\mathbf R}^n$.   
Note that  $x_j =0$ for $j=d+1,\ldots, n$.  
We obtain  
\begin{align*}
Q_f(x_1,\ldots, x_n) 
& = \sum_{\ell=1}^n \left( \sum_{j=1}^n x_j  \lambda_{\ell}^{j-1} \right)^2  
= \sum_{\ell=1}^n p(\lambda_{\ell})^2 \\ 
& = \mu_1p(\lambda_1)^2 + \mu_1p(\lambda_2)^2 = \mu_1i^2 + \mu_1(-i)^2 \\
& = -2\mu_1 < 0.
\end{align*}
Thus, if $f(x)$ is not real-rooted, then the quadratic form $Q_f$ is not positive semidefinite.
This completes the proof.  
\end{proof}

\end{document}